\theoremstyle{plain}
\newtheorem{theorem}{Theorem}[section]
\newtheorem{proposition}[theorem]{Proposition}
\newtheorem{corollary}[theorem]{Corollary}
\newtheorem{conj}[theorem]{Conjecture}
\theoremstyle{definition}
\newtheorem*{ack}{Acknowledgements}
\theoremstyle{remark}
\newtheorem{remark}[theorem]{Remark}
\numberwithin{equation}{section}
\newcommand{\length}{\operatorname{length}}
\newcommand{\bettideg}{\operatorname{\beta-deg}}
\newcommand{\bsx}{\boldsymbol x}
\newcommand{\bbQ}{\mathbb{Q}}
\newcommand{\bbZ}{\mathbb{Z}}
\newcommand{\chr}{\operatorname{char}}
\newcommand{\codim}{\operatorname{codim}}
\newcommand{\Coker}{\operatorname{Coker}}
\newcommand{\cone}{\operatorname{cone}}
\renewcommand{\dim}{\operatorname{dim}\,}
\newcommand{\edim}{{\operatorname{emb\,dim}\,}}
\newcommand{\End}{\operatorname{End}}
\newcommand{\Ext}[4]{\operatorname{Ext}^{#1}_{#2}(#3,#4)}
\newcommand{\fm}{\mathfrak{m}}
\newcommand{\fn}{\mathfrak{n}}
\newcommand{\ges}{\geqslant}
\newcommand{\hh}[1]{\operatorname{H}(#1)}
\newcommand{\HH}[2]{\operatorname{H}_{#1}(#2)}
\newcommand{\CH}[2]{\operatorname{H}^{#1}(#2)}
\newcommand{\Hilb}[2]{\operatorname{Hilb}_{#1}(#2)}
\newcommand{\Hom}{\operatorname{Hom}}
\newcommand{\Ker}{\operatorname{Ker}}
\newcommand{\lra}{\longrightarrow}
\newcommand{\poin}[3]{\operatorname{P}^{#1}_{#2}(#3)}
\newcommand{\rank}{\operatorname{rank}}
\newcommand{\shift}{\mathsf{\Sigma}}
\newcommand{\xra}{\xrightarrow}
\begin{document}

\title[Finite free complexes]{Examples of finite free complexes of \\ small rank and small homology}

\author[Iyengar]{Srikanth B. Iyengar}
\address{Department of Mathematics, University of Utah, Salt Lake City, UT 84112, USA}
\email{iyengar@math.utah.edu}

\author[Walker]{Mark E. Walker}
\address{Department of Mathematics, University of Nebraska, Lincoln, NE 68588, U.S.A.}
\email{mark.walker@unl.edu}

\begin{abstract}
This work concerns finite free complexes over commutative noetherian rings, in particular over group algebras of elementary abelian groups.  The main contribution is the construction of complexes such that the total rank of their underlying free modules, or the total length of their homology, is less than predicted by various conjectures in the theory of transformation groups and in local algebra. 
\end{abstract}

\date{6th May 2018}

\keywords{complete intersection ring, finite free complex, total Betti number, toral rank conjecture}
\subjclass[2010]{13D02 (primary); 13D22, 55M35, 57S17  (secondary)}

\maketitle

\section{Introduction}

In this paper we construct counterexamples to five related conjectures concerning the rank and homology of finite free complexes over
commutative noetherian rings, and, in particular, over group algebras of elementary abelian groups. 

\begin{conj} 
\label{conj:carlsson} 
Let $k$ be a field of positive characteristic $p$, let $E$ be an elementary abelian $p$-group of rank $r$ and $kE$ the corresponding 
group algebra. If $F$ is a bounded complex of free $kE$-modules of finite rank and $\hh F\ne 0$, then $\rank_k \hh F \geq 2^r$.   
\end{conj}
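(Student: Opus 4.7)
The plan is first to reformulate the conjecture purely algebraically. Choosing generators $g_1, \dots, g_r$ of $E$ and setting $x_i = g_i - 1$, the relations $g_i^p = 1$ become $x_i^p = 0$ in characteristic $p$, so $kE \cong R := k[x_1,\dots,x_r]/(x_1^p,\dots,x_r^p)$, a local Artinian complete intersection of length $p^r$ with residue field $k$. The conjecture reads: every perfect complex $F$ of $R$-modules with $\hh F \ne 0$ satisfies $\rank_k \hh F \ge 2^r$. This version has the advantage that $R$ is a quotient of a regular ring by a regular sequence, so standard tools from local algebra -- minimal free resolutions, Koszul complexes, and change-of-ring spectral sequences -- become available.

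The natural strategy is induction on $r$. For $r = 1$, the ring $R = k[x]/(x^p)$ is principal Artinian; up to splitting off contractible summands the only indecomposable perfect complexes with nonzero homology are two-term complexes $R \xra{x^i} R$ for $1 \le i \le p$, whose total homology has rank $p \ge 2 = 2^1$. For the inductive step, one would pass from $R$ to $R/(z)$ for a suitably chosen $z \in \fm \setminus \fm^2$, apply the inductive bound $2^{r-1}$ to $F \otimes_R R/(z)$ (which is a perfect complex over a ring of the same form but with one fewer generator), and then amplify back to $2^r$ for $F$. The amplification would come from a Koszul-type spectral sequence built from $0 \to R \xra{z} R \to R/(z) \to 0$, which in favourable situations relates $\rank_k \hh F$ and $\rank_k \hh{F \otimes_R R/(z)}$ by a factor of at most two.

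The main obstacle, and what I expect to be the crux, is controlling this factor of two in full generality. Unless $z$ is chosen with care with respect to the support variety of $F$, the induced action of $z$ on $\hh F$ can be erratic: the homology of $F \otimes_R R/(z)$ may vanish entirely, destroying the inductive hypothesis, or it may grow disproportionately, defeating the amplification. A successful proof would therefore require a genericity argument producing a $z$ along which $F$ behaves essentially like a Koszul extension, and it is precisely the construction of such a $z$ that has blocked progress for $r \ge 4$. Since the setting already produces Artinian rings that are not Koszul when $p \ge 3$, I would in any case first concentrate on the case $p = 2$, where $R$ is the exterior-like ring $k[x_1,\dots,x_r]/(x_1^2,\dots,x_r^2)$ and the Koszul spectral sequence tools act most cleanly.
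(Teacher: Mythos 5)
The statement you are trying to prove is a conjecture, and the main point of this paper is that it is \emph{false} whenever $p\ge 3$ and $r\ge 8$ (Corollary~\ref{cor:eapg}). So no strategy can close the gap you yourself identify in the inductive step; that gap is not a technical nuisance awaiting a cleverer choice of $z$, but the precise place where the statement breaks down. Concretely, the paper takes the Koszul complex $K$ on a minimal generating set of the maximal ideal of $kE\cong k[t_1,\dots,t_r]/(t_1^p,\dots,t_r^p)$, whose homology is the exterior algebra $\Lambda$ on $r$ generators in degree one, picks the quadratic element $w=\sum_{i=1}^{4}x_iy_i$ built from eight of those generators, and forms $F=\cone(\lambda_z)$ for a cycle $z\in K_2$ representing $w$. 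The long exact sequence of the mapping cone gives
\[
\length_R \hh F=\rank_k\Coker(\lambda_w)+\rank_k\Ker(\lambda_w),
\]
and the hard Lefschetz property of $w$ on $\Lambda$ (Proposition~\ref{prp:lefschetz}, Corollary~\ref{cor:lefschetz}) forces this sum down to $\binom{10}{5}\cdot 2^{r-8}=2^r-2^{r-6}<2^r$. Your proposed amplification by a factor of at most two when passing from $R/(z)$ back to $R$ is exactly the inequality such an $F$ violates: no genericity argument can produce a hyperplane along which this complex behaves like a Koszul extension, because the required conclusion is false.

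Two of your instincts are nonetheless sound. The base case $r=1$ is correct, and your suspicion that characteristic $2$ is special is exactly right: when $\chr k=2$ every element $w\in\Lambda_{\ges 1}$ satisfies $w^2=0$, which forces $\rank_k\Coker(\lambda_w)+\rank_k\Ker(\lambda_w)\ge 2^r$ and blocks the paper's construction entirely (see the remark following Corollary~\ref{cor:lefschetz}). The conjecture remains open for $p=2$, where Carlsson has established it for $r\le 3$, so restricting to that case is the only setting in which your program could conceivably succeed. For odd $p$ and $r\ge 8$, however, you are attempting to prove a false statement.
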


Here $\rank_{k}\hh F$ denotes $\sum_{i}\rank_{k}\HH iF$. Conjecture~\ref{conj:carlsson} is an algebraic generalization of a conjecture in topology due to Carlsson, recalled in Remark~\ref{rem:carlsson}; see~\cite[Question 7.3]{AB}, and ~\cite[Section 2]{AS}.  Carlsson has proved Conjecture~\ref{conj:carlsson} when $p=2$ and $r\le 3$; see~\cite[Theorem~2]{Gc0}.   Corollary~\ref{cor:eapg} below provides a counterexample whenever $p\geq 3$ and $r\ge 8$. 

The next conjecture concerns a graded polynomial ring $R = k[t_1, \dots, t_d]$ over a field $k$, on indeterminates $t_1, \dots, t_d$ of  upper degree two. A \emph{Differential Graded $R$-module} is a graded $R$-module $F$ equipped with an $R$-linear endomorphism $d$ of $F$ that has (upper) degree $1$ and satisfies $d^2 = 0$. Such a DG module  is \emph{semifree} provided there is a chain of graded submodules 
\[
0 = F(-1) \subseteq F(0) \subseteq F(1) \subseteq \cdots \subseteq\, \bigcup_{i\ges 0}F(i) = F
\]
such that $F(i) /F(i-1)$ is a graded free $R$-module and $d(F(i)) \subseteq F(i-1)$ for all $i$. In particular,  ignoring the differential, $F$ itself is a
graded free  $R$-module;  we write $\rank_R F$ for its rank.  For further details, see, for example, \cite[\S1.3]{Av:barca}.

\begin{conj} 
\label{conj:gradedGTRC} 
For $R = k[t_1, \dots, t_d]$ as above, if $F$ is a semifree DG $R$-module such that 
$\rank_{k}\hh F$ is finite and non-zero, then $\rank_{R} F \geq 2^d$.   
\end{conj}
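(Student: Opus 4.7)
The plan is to reduce Conjecture~\ref{conj:gradedGTRC} to a total-rank bound for finite free complexes over an exterior algebra, for which techniques analogous to those used in Conjecture~\ref{conj:carlsson} should be available. First I would pass to a minimal semifree model of $F$, i.e., arrange that $d(F) \subseteq \fm F$, where $\fm = (t_1, \dots, t_d)$; this standard reduction preserves both $\rank_R F$ and $\hh F$, and it identifies $\rank_R F$ with $\rank_k (F/\fm F)$.

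Next I would invoke Koszul duality between $R$ and the Koszul-dual exterior algebra $\Lambda = \Lambda_k(y_1, \dots, y_d)$, with $|y_i| = -1$, transporting $F$ to a DG $\Lambda$-module $G$ (heuristically $G = F \otimes_R \Lambda$ equipped with the twisting differential built from $d$ and the Koszul element $\sum_i t_i \otimes y_i$). The finiteness of $\rank_k \hh F$ should guarantee that $G$ is (quasi-isomorphic to) a bounded complex of finite free graded $\Lambda$-modules with nonzero homology, while the correspondence matches total ranks: $\rank_\Lambda G = \rank_R F$.

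The final step would be to prove the bound $\rank_\Lambda G \geq 2^d$ for such $G$. A natural approach is induction on $d$: filter $G$ by the action of the last exterior generator $y_d$ and use the short exact sequence $0 \to y_d G \to G \to G/y_d G \to 0$ to produce a spectral sequence that relates $\hh G$ to the homology of complexes over the smaller exterior algebra $\Lambda_k(y_1, \dots, y_{d-1})$, then apply the inductive hypothesis to each.

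The main obstacle lies precisely in this last step. In characteristic $2$, $\Lambda \cong kE$ for an elementary abelian $2$-group $E$ of rank $d$, so the desired bound is Conjecture~\ref{conj:carlsson} itself, and in odd characteristic the analogous statement over $\Lambda$ is equally mysterious. The Koszul-duality reduction therefore only converts Conjecture~\ref{conj:gradedGTRC} into an equally hard problem rather than resolving it. In light of the announcement in the abstract, one should in fact expect Conjecture~\ref{conj:gradedGTRC} to fail in a certain range of $d$, with the same correspondence pulling a counterexample back from $\Lambda$ (or $kE$) to a counterexample over $R$.
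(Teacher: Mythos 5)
The statement you were asked about is one of the conjectures this paper \emph{refutes}: Corollary~\ref{cor:TRC} exhibits, for $\chr k \ne 2$ and $d \ge 8$, a semifree DG $R$-module $G$ with $\rank_k \hh G$ finite and nonzero but $\rank_R G = 2^d - 2^{d-6} < 2^d$. So there is no proof to reconstruct, and your instinct at the end of your proposal --- that the conjecture should fail --- is correct. Your Koszul-duality framework is also broadly consistent with what the paper actually does: the construction works with the Koszul resolution $X$ of $k$ over $R$ and uses the identification of $\hh{\End_R(X)}$ with an exterior algebra $\Lambda$ on $d$ generators of degree $-1$. But your proposal stops exactly where the real work begins; the reduction to $\Lambda$ by itself decides nothing, and your suggested induction on $d$ (filtering by the last exterior generator) would at best reprove the bound in the cases where it holds and cannot detect its failure.

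The missing idea is a quantitative Lefschetz-type statement in the exterior algebra (Proposition~\ref{prp:lefschetz} and Corollary~\ref{cor:lefschetz}): if $\chr k = 0$ or $\chr k > (n+1)/2$ and $\Lambda$ is exterior on $x_1,\dots,x_n,y_1,\dots,y_n$, then multiplication by $w = \sum_i x_i y_i$ is injective up to the middle degree and surjective from there on, and
\[
\rank_k\Coker(\lambda_w) + \rank_k\Ker(\lambda_w) = \binom{2n+2}{n+1},
\]
which for $n = 4$ equals $252 < 2^8$; tensoring with the exterior algebra on the remaining $d-8$ generators produces $w \in \Lambda^2$ with $\rank_k\Coker(\lambda_w) + \rank_k\Ker(\lambda_w) = 2^d - 2^{d-6}$. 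One then lifts $w$ to a degree $-2$ cycle $\zeta$ in $\End_R(X)$, forms $F = \shift^{-3}\cone(\zeta)$, and takes $G$ to be a minimal semifree model of $F$; the long exact sequence gives $\CH{0}{G} \cong k \cong \CH{3}{G}$ and $\CH{j}{G} = 0$ otherwise, while minimality gives $\rank_R G = \rank_k\hh{\Hom_R(F,X)} = \rank_k\Coker(\lambda_w) + \rank_k\Ker(\lambda_w) < 2^d$. This also explains why characteristic $2$ must be excluded, as you suspected: there $w^2 = 0$ for every $w \in \Lambda_{\ges 1}$, which forces $\rank_k\Coker(\lambda_w)+\rank_k\Ker(\lambda_w) \ge 2^d$, so no counterexample can arise this way, and indeed Conjecture~\ref{conj:carlsson} remains open for $p=2$.
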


This is a generalization of a topological conjecture due to Halperin; see Remark~\ref{rem:halperin}. For $d\le 3$ Conjecture~\ref{conj:gradedGTRC}  has been proved by Allday and Puppe~\cite[Proposition~1.1 and  Corollary~1.2]{AP2}; see also~\cite[Theorem~5.2, Remark~5.5]{ABI}. Walker~\cite[Theorem~5.3]{Mw0} proved it when $\chr k\ne 2$ and $\hh F$ is concentrated in even degrees or in odd degrees. Corollary~\ref{cor:TRC} below describes counterexamples when $\chr k\ne 2$ and $d\ge 8$; the DG modules constructed have cohomology in degrees $0$ and $3$. 

A  conjecture due to  Buchsbaum and Eisenbud~\cite[Proposition~1.4]{BE}, and Horrocks~\cite[Problem 24]{Rh} predicts that over a local ring $R$ of Krull
dimension $d$,  any free resolution $F$ of a non-zero module of finite length satisfies $\rank_R F_i \geq \binom di$  for all $i$.  In particular, $\rank_R F
\geq 2^d$; this last inequality was conjectured by Avramov, see 
\cite[pp.~63]{EG}, and proved by Walker~\cite[Theorem 1]{Mw} when $R$ is a complete intersection whose residual characteristic is not two, and also when $R$ is any local ring containing a field of positive characteristic not equal to two.

Folklore  has extended Avramov's conjecture to  all finite free complexes.

\begin{conj} 
\label{conj:GTRC} 
If $R$ is a local ring and $F$ is a  complex of  free $R$-modules with $\length_{R}\hh F$ finite and non-zero, then  $\rank_R F  \geq 2^d$, where $d$ is the Krull dimension of $R$.
\end{conj}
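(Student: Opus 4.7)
The plan is to extend Walker's theorems \cite[Theorem~1]{Mw} from the complete intersection and equicharacteristic positive cases to an arbitrary local ring $R$. First I would reduce to the case where $R$ is complete with infinite residue field, by faithfully flat base change, which preserves $\rank_R F$ and the finite length of $\hh F$. By Cohen's structure theorem, write $R = Q/I$ for some regular local ring $Q$ of dimension $n = \edim R$, so that $n \geq d$ with equality iff $R$ is regular; the hope is that getting a lower bound of $2^n$ would be more than sufficient since $n \geq d$.

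The cleanest homological attack uses the Koszul complex of a regular system of parameters of $Q$, which is a finite $Q$-free resolution of $k$ of total rank $2^n$. Via the action of $\Ext{*}{Q}{k}{k} \cong \bigwedge k^n$ on $\Ext{*}{R}{M}{k}$, or via higher matrix factorizations of elements of $I$, one would try to build from $F$ an auxiliary $R$-free complex of rank at least $2^n$, or at least to force such a bound on $\rank_R F$ directly by a spectral sequence / subadditivity argument.

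A more concrete approach is by deformation: construct a flat one-parameter family connecting $R$ to a complete intersection $R'$ of the same Krull dimension $d$, in such a way that $F$ deforms to a complex $F'$ of the same total rank whose homology remains of finite length over $R'$. If this succeeds, Walker's theorem for complete intersections of residual characteristic $\ne 2$ closes the argument; in residual characteristic two one would instead try a perfectoid or reduction-mod-$p$ argument to pass to the equicharacteristic positive case, where the other half of Walker's work applies directly.

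The main obstacle is that the Adams-operation machinery underpinning Walker's theorems is rigidly tied either to a matrix factorization presentation or to the Frobenius endomorphism, and no analogous ``virtual'' operation is known on the derived category of a general local ring. Producing such an invariant, or alternatively producing a rank-preserving deformation of the pair $(R, F)$ to a complete intersection of the same dimension, is where I expect any serious attempt at this conjecture to stall, and this is the step for which I see no plausible construction.
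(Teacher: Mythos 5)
The fundamental problem with your proposal is that the statement you are trying to prove is \emph{false}, and the whole point of the paper is to disprove it: Theorem~\ref{thm:multiplicity} (with Corollary~\ref{cor:lefschetz} as input) produces, over any regular local ring $R$ of dimension $e\ge 8$ with residue field of characteristic $\ne 2$, a finite free complex $F$ with $\HH 0F\cong k\cong\HH 1F$ and total rank $2^{e}-2^{e-6}<2^{e}$. Note that this counterexample already lives over a regular local ring, which is a complete intersection of residual characteristic $\ne 2$; so no reduction to Walker's setting, no deformation to a complete intersection, and no passage from $d$ to $\edim R$ can rescue the argument --- the ring is already as nice as possible, and the bound still fails.

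The conceptual point your proposal misses is the gap between Walker's theorem and the conjecture. Walker's result \cite[Theorem~1]{Mw} bounds the total rank of a free \emph{resolution of a module} of finite length; the conjecture asserts the same bound for an arbitrary finite free complex with finite-length homology, and that extension is exactly what breaks. The paper's construction takes the minimal free resolution $X$ of $k$, chooses a degree-two element $w$ of $\Ext{}Rkk\cong\Lambda$ (an exterior algebra on $e$ generators) for which $\rank_k\Coker(\lambda_w)+\rank_k\Ker(\lambda_w)<2^{e}$ --- this is the Lefschetz-element computation of Proposition~\ref{prp:lefschetz} and Corollary~\ref{cor:lefschetz}, possible only when $e\ge 8$ and $\chr k\ne 2$ --- realizes $w$ as a chain map $\zeta\colon\shift^{-2}X\to X$, and sets $F=\shift\cone(\zeta)$. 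The homology of $F$ is $k$ in two adjacent degrees rather than a single module, which is why the module-theoretic machinery (Adams operations, matrix factorizations, Frobenius) you invoke never gets traction. So the step at which you say you ``see no plausible construction'' is not a technical obstruction to be overcome: the correct response to this statement is to exhibit the counterexample, not to seek a proof.
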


For $d\le 3$ this was 
proved  by Avramov, Buchweitz, and Iyengar~\cite[Theorem~5.2]{ABI}. Theorem \ref{thm:multiplicity} below provides counterexamples;
the simplest one occurs  when $R$ is a regular local ring of dimension $8$ and residual characteristic not two.  

The next conjecture concerns differential modules.   A \emph{differential module} over a ring $R$ is an $R$-module $F$ equipped with an $R$-linear endomorphism $d$ satisfying $d^2 = 0$.  For such an $F$, a \emph{free flag} consists of a chain of submodules
\[
0 = F(-1) \subseteq F(0) \subseteq F(1) \subseteq \cdots \subseteq\, \bigcup_{i\ges 0}F(i) = F
\]
such that $F(i)/F(i-1)$ is a free $R$-module and $d(F(i)) \subseteq F(i-1)$ for all $i$. 

\begin{conj}  
\label{conj:DMC} 
If $R$ is a local ring and $F$ is a differential $R$-module that admits a  free flag and has the property that $\length_{R}\hh F$ is finite and non-zero, then $\rank_RF\geq 2^d$, where $d$ is the Krull dimension of $R$. 
\end{conj}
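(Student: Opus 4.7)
The plan is to reduce Conjecture~\ref{conj:DMC} to Conjecture~\ref{conj:GTRC} by associating to $F$ a bounded complex of free $R$-modules with the same total rank and finite non-zero homology. Given the flag $0 = F(-1) \subseteq F(0) \subseteq \cdots \subseteq F(n) = F$, set $G_i = F(i)/F(i-1)$. The inclusion $d(F(i)) \subseteq F(i-1)$ induces $R$-linear maps $\overline d_i : G_i \to G_{i-1}$ satisfying $\overline d_{i-1}\,\overline d_i = 0$, yielding a bounded complex
\[
G_\bullet : G_n \xrightarrow{\overline d_n} G_{n-1} \to \cdots \to G_0
\]
of finitely generated free $R$-modules with $\rank_R G_\bullet = \rank_R F$.

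Next, I would exploit the filtration spectral sequence: $E^1_p = G_p$ with $d_1 = \overline d$, so $E^2 = \hh{G_\bullet}$, converging to the associated graded of $\hh F$. Non-vanishing of $\hh F$ forces $\hh{G_\bullet} \ne 0$. For the finite-length property I would argue as follows: at any non-maximal prime $\mathfrak p$ one has $\hh{F_\mathfrak p} = 0$, and I want to transfer this to $\hh{G_\bullet^\mathfrak p} = 0$. A simple example with $F = R^4$ whose flag lets $d$ jump from $F(i)$ directly to $F(i-2)$ shows this transfer can fail for an arbitrary free flag, so something more must be arranged.

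The remedy is to replace the given flag by a \emph{minimal} one, in which each basis element of $G_i$ is represented by some $x \in F(i)$ with $d(x) \notin F(i-2)$ (after a suitable change of basis at lower stages). For such a minimal flag the spectral sequence should degenerate at $E^2$, giving $\hh{G_\bullet} \cong \hh F$ as $R$-modules. Then $\length_R \hh{G_\bullet} < \infty$, and applying Conjecture~\ref{conj:GTRC} to $G_\bullet$ yields $\rank_R F = \rank_R G_\bullet \geq 2^d$. For $d \leq 3$ this invokes the Avramov--Buchweitz--Iyengar theorem (\cite[Theorem~5.2]{ABI}).

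The main obstacle will be establishing the existence of a minimal flag and verifying that it forces the higher spectral sequence differentials to vanish. Unlike the DG-module setting, where minimal semifree resolutions over $R$ (with trivial differential) are well-understood, the iterative construction here must carefully track how $d$ descends the filtration, possibly requiring refinement of the flag and simultaneous changes of basis at several stages at once. This delicate step is where I expect the reduction to become most technical, and it is also where the argument would most plausibly break down in a way that would be consistent with the existence of counterexamples in sufficiently large dimension.
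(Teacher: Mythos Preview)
The statement is a \emph{conjecture}, and the paper does not prove it---it \emph{disproves} it. Corollary~\ref{cor:dm} constructs, over any regular local ring of dimension $d \geq 8$ with residue characteristic not two, a differential $R$-module $D$ admitting a free flag, with $\length_R \hh D$ finite and non-zero, and $\rank_R D < 2^d$. So any purported proof of Conjecture~\ref{conj:DMC} must contain a genuine gap.

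Your gap is not where you locate it. Your plan is to reduce Conjecture~\ref{conj:DMC} to Conjecture~\ref{conj:GTRC}, but Conjecture~\ref{conj:GTRC} is \emph{also} disproved in the paper (Theorem~\ref{thm:multiplicity} applied with $R$ regular, so that $\codim R=0$ and $p_F(1)$ is the total Betti number). Hence even if your minimal-flag/spectral-sequence argument worked flawlessly and produced a bounded free complex $G_\bullet$ with $\hh{G_\bullet}\cong\hh F$, the step ``apply Conjecture~\ref{conj:GTRC} to $G_\bullet$'' is invalid for $d\geq 8$. Concretely, the paper's counterexample $D$ to Conjecture~\ref{conj:DMC} is precisely the compression of the minimal free complex $G$ that already witnesses the failure of Conjecture~\ref{conj:GTRC}; for this $D$ the obvious flag has associated graded equal to $G$, the filtration spectral sequence degenerates at $E^2$ trivially, and your reduction lands on a complex that already satisfies $\rank_R G < 2^d$. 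Your closing hedge---that the minimal-flag step is where things ``would most plausibly break down''---misdiagnoses the failure mode: it is the target conjecture, not the reduction, that collapses. At best your outline, if completed, would show that Conjectures~\ref{conj:GTRC} and~\ref{conj:DMC} are equivalent (the converse implication being compression), which proves neither.
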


Conjecture  \ref{conj:DMC} is stated in \cite[5.3]{ABI} and proven there for $d \le 3$. Given any chain complex of $R$-modules the direct sum of its components
is a differential $R$-module, called its \emph{compression}. The compression of a  free complex admits  a  free flag. 
Thus Conjecture~\ref{conj:DMC}
implies \ref{conj:GTRC}, so any counterexample to the latter yields one also to the former; see  Corollary~\ref{cor:dm}. 

The final conjecture concerns the sequence of Betti numbers for modules over complete intersection rings.   The \emph{$i$th Betti number} of a finitely
generated module $M$ over a local ring $R$ is the rank of the free $R$-module in degree $i$ in a minimal free resolution of $M$; we denote it by
$\beta^R_i(M)$. A  local ring is a \emph{complete intersection} if its completion is isomorphic to the quotient of a regular local ring by a 
regular sequence. 
Over such a ring the  Betti numbers of any finitely generated $R$-module $M$ are eventually given by a quasi-polynomial of period $2$;
see \cite[Corollary~4.2]{Gu}, also \cite[Theorem 4.1]{Av:Virtual} and \cite[Theorem 7.3]{AvB2}. In detail,  when the projective dimension of $M$ is infinite,
there is a positive integer $c$, called the \emph{complexity} of $M$,  a positive real number $\bettideg M$, called the \emph{Betti degree} of $M$, and
polynomials $q_{ev}$ and $q_{odd}$ of degree at most $c-2$ such for $i \gg 0$ one has 
\[
\beta_i^R(M) = \frac{\bettideg M}{2^{c-1} (c-1)!} i^{c-1} + 
\begin{cases}
q_{ev}(i) & \text{if $i$ is even and } \\
q_{odd}(i) & \text{if $i$ is odd.} \\
\end{cases}
\]
In this notation \cite[Conjecture 7.5]{AvB2} reads as follows.

\begin{conj} 
\label{conj:BDC} 
For any finitely generated module $M$ of complexity $c\ge 1$ over a complete intersection local ring $R$, one has $\bettideg M \geq 2^{c-1}$.
\end{conj}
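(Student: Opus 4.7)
The plan is to reduce Conjecture~\ref{conj:BDC} to a statement about semifree DG modules over a polynomial ring, so as to apply (a variant of) Conjecture~\ref{conj:gradedGTRC}.  Step one is a standard reduction: using a quasi-deformation of the type used by Avramov---Betti degree, complexity, and the residue field are all preserved under faithfully flat extensions whose closed fibre is a complete intersection---one may assume that $R = S/(f_1, \dots, f_c)$ with $S$ a regular local ring and $(f_1, \dots, f_c)$ an $S$-regular sequence, and then passing to associated graded rings one may further reduce to a graded presentation with each $f_i$ of degree $2$.

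Step two connects $\bettideg M$ to graded module theory over a polynomial ring.  Let $F$ be the minimal free resolution of $M$ over $R$.  The Eisenbud operators $\chi_1, \dots, \chi_c$, chain endomorphisms of $F$ of homological degree $-2$ that lift multiplication by the $f_i$, give $\bigoplus_i \Ext iRMk$ the structure of a finitely generated graded module over $T = k[t_1, \dots, t_c]$ with $|t_i| = 2$.  The complexity hypothesis makes this $T$-module of Krull dimension exactly $c$, and a direct Hilbert-series computation identifies $\bettideg M$ with its Hilbert--Samuel multiplicity over $T$.

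Step three is to convert this multiplicity bound into a DG-module statement.  From $F$ together with the $\chi_i$ one constructs, by Koszul-style tensoring against $T$, a semifree DG $T$-module $\widetilde F$ whose rank over $T$ is controlled linearly by $\bettideg M$ and whose homology has finite length over $k$ and is non-zero.  Conjecture~\ref{conj:gradedGTRC} applied to $\widetilde F$ then yields, after tracking constants, the predicted inequality $\bettideg M \ges 2^{c-1}$.

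The main obstacle is twofold and unavoidable.  First, Conjecture~\ref{conj:gradedGTRC} itself is open for $d \ges 4$, so even if the reduction goes through cleanly the argument produces no unconditional statement in the range where the conjecture is interesting.  Second, even granting Conjecture~\ref{conj:gradedGTRC}, the DG module $\widetilde F$ must be produced with rank matched \emph{exactly} to $\bettideg M$: the Eisenbud operators encode only the asymptotic behaviour of the Betti sequence, and any factor-of-two slack in packaging them into a DG $T$-module would negate the single power of $2$ separating the conclusion from the trivial bound.  I expect Step three, and specifically the sharp rank-matching, to be the decisive difficulty, and this is where I would look hardest to understand whether the predicted bound actually holds.
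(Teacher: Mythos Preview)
Your proposal attempts to prove a statement that the paper itself \emph{disproves}. Conjecture~\ref{conj:BDC} is listed in the introduction among five conjectures for which the paper constructs counterexamples; see Corollary~\ref{cor:multiplicity}, which produces, for any complete intersection of Krull dimension $0$, embedding dimension at least $8$, residual characteristic not $2$, and defining relations of order at least $3$, a finitely generated module $M$ with $\bettideg M < 2^{c-1}$. So there is no correct proof to be found, and any argument that appears to yield one must contain an error.

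Your reduction strategy makes the failure transparent. You propose to deduce Conjecture~\ref{conj:BDC} from Conjecture~\ref{conj:gradedGTRC}, but Conjecture~\ref{conj:gradedGTRC} is \emph{also} shown to be false in the paper (Corollary~\ref{cor:TRC}), for $d\ge 8$ and $\chr k\ne 2$. The link you sketch between Betti degree and ranks of semifree DG modules over a polynomial ring is genuine---indeed the introduction notes that Conjecture~\ref{conj:BDC} was motivated by exactly such a relationship---but it runs in the opposite direction: the same Lefschetz-element construction that yields small-rank DG modules over $k[t_1,\dots,t_d]$ yields, via the Ext algebra $\Ext{}Rkk\cong \Lambda\otimes_k S$ and the identity $p_M(1)=2\,\bettideg M$ of Remark~\ref{rem:Betti-degree}, modules of small Betti degree. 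In short, the bridge you build is sound enough to carry counterexamples across, which is precisely why it cannot carry a proof.
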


This conjecture was motivated by a relationship with  Conjecture \ref{conj:GTRC}; 
see \cite[7.4]{AvB2}.
Avramov and Buchweitz~\cite[Remark 7.5.1]{AvB2}  proved this conjecture when $c\le 2$, and it holds  when $q_{ev}=q_{odd}$, in particular when $R$ is the
localization of a graded ring defined by quadrics~\cite[Theorem~2.3]{Av:rational}.

Corollary \ref{cor:multiplicity} provides counterexamples whenever $R$  has
defining relations of order at least three, Krull dimension  $0$, embedding dimension  at least $8$, and residual characteristic not  equal to $2$.

The starting point of the construction of our examples is a result on the existence of Lefschetz elements in exterior algebras, recalled in
Section~\ref{sec:lefschetz}. This connection is present already in the work of Allday and Halperin~\cite{AH}; see also  \cite[Example 4.5]{AP1} by Allday and Puppe, and \cite[Corollary~7.2.5]{FOT}, by F\'elix, Oprea, and Tanr\'e.  

\section{Lefschetz elements in exterior algebras}
\label{sec:lefschetz}
In this section we recall a basic result concerning exterior algebras that underlies all our constructions. The \emph{Hilbert series} of a finite dimensional
$\bbZ$-graded vector space $W=\{W_{i}\}_{i\in\bbZ}$ over a field $k$ is the Laurent polynomial   
\[
\Hilb Wt :=\sum_{i\in\bbZ} \rank_{k}(W_{i})t^{i}   
\]  
with non-negative integer coefficients. Evidently $\Hilb W1 =\rank_{k}W$, and 
\[
\Hilb{\shift^{n}W}t=t^{n}\Hilb Wt\,,
\]
where $\shift^{n}W$ is the graded $k$-vector space with $(\shift^{n}W)_{i}=W_{i-n}$ 
for each $i$.

\begin{proposition}
\label{prp:lefschetz}
Let $k$ be a field and $\Lambda$ the exterior algebra of a $k$-vector space with basis $x_1, \dots, x_n, y_1, \dots, y_n,$ in lower degree one. Thus $\Lambda_i$, the degree $i$ part  of $\Lambda$, is the $i$-th exterior power of the given vector space. 
Set
\[
w:= \sum_{i=1}^{n}x_{i}y_{i}\,   \in \Lambda_2\,,
\]
and let $\lambda_{w}\colon \Lambda \to \shift^{-2}\Lambda$  be the morphism of graded $\Lambda$-modules where $1\mapsto w$.

If $\chr k=0$ or $\chr k > \frac{n+1}{2}$,  then the map
\[
(\lambda_{w})_{i}\colon \Lambda_{i} \to \Lambda_{i+2}
\]
is injective for $i \le n-1$ and surjective for $i \ge n-1$. Moreover, 
we have
\[
\Hilb{\Coker(\lambda_{w})}t = \frac {h(t)}{t^{2}}  \qquad\text{and} \qquad \Hilb{\Ker(\lambda_{w})}t = t^{2n}h(1/t) 
\]
where $h(t):=\sum_{i=0}^{n}\left[\binom {2n}i - \binom{2n}{i-2}\right]t^{i}$, and there is an equality
\[
\rank_k \Coker(\lambda_{w}) + \rank_k \Ker(\lambda_{w})   =  \binom{2n+2}{n+1}\,.
\]
\end{proposition}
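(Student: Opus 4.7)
The strategy is to endow $\Lambda$ with an $\mathfrak{sl}_{2}$-action in which $\lambda_{w}$ is the raising operator, and then read off the injectivity and surjectivity assertions from $\mathfrak{sl}_{2}$-representation theory; the Hilbert series formulas are then routine dimension counts. Let $\partial_{x_{i}}, \partial_{y_{i}}$ be the graded $k$-derivations of $\Lambda$ that act dually on the generators (so $\partial_{x_{i}}(x_{j}) = \delta_{ij}$, $\partial_{x_{i}}(y_{j}) = 0$, and symmetrically), and set
\[
e := \lambda_{w}, \qquad f := \sum_{i=1}^{n}\partial_{y_{i}}\partial_{x_{i}}, \qquad h|_{\Lambda_{i}} := (i-n)\,\mathrm{id}.
\]
The relations $[h,e]=2e$ and $[h,f]=-2f$ are immediate from degree considerations, while $[e,f]=h$ is a short direct calculation (e.g.\ verify on $1 \in \Lambda_{0}$ and propagate, or check on monomials). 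Thus $\Lambda$ becomes a graded $\mathfrak{sl}_{2}$-module with $h$-weights in $[-n,n]$.

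The main technical step is to show that $\Lambda$ is semisimple as an $\mathfrak{sl}_{2}$-module under the stated hypothesis on $\chr k$. This is Weyl's theorem in characteristic zero; in positive characteristic one argues via the primitive decomposition $\Lambda_{k}=\bigoplus_{j\ges 0}e^{j}P_{k-2j}$, where $P_{k}:=\{\alpha\in\Lambda_{k}:f\alpha=0\}$, and the bound $\chr k > (n+1)/2$ is precisely what guarantees that the integer scalars occurring when writing a general element as $e^{j}$ applied to its primitive representative remain invertible in $k$. I expect this semisimplicity check to be the main obstacle. Granted it, the standard structure of $\mathfrak{sl}_{2}$-modules yields that $e$ is injective on every weight space of weight at most $-1$ and surjective onto every weight space of weight at least $1$; since $\Lambda_{i}$ has weight $i-n$, this is exactly the claim that $(\lambda_{w})_{i}$ is injective for $i\le n-1$ and surjective for $i\ge n-1$.

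From here the remaining assertions are bookkeeping. For $j\le n-1$ injectivity on $\Lambda_{j}$ yields $\dim_{k}\Coker(\lambda_{w})_{j-2} = \binom{2n}{j}-\binom{2n}{j-2}$ (with the convention $\binom{2n}{\ell}=0$ for $\ell<0$), and for $j \ge n+1$ surjectivity makes $\Coker(\lambda_{w})_{j-2}=0$; assembling these contributions gives $\Hilb{\Coker(\lambda_{w})}{t} = h(t)/t^{2}$. The kernel series is a symmetric count, and alternatively follows from the Poincar\'e-duality isomorphism $\Lambda_{i} \cong (\Lambda_{2n-i})^{\vee}$, which intertwines $\lambda_{w}$ with the transpose of its appropriate shift. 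Finally, the four-term exact sequence $0\to\Ker(\lambda_{w})\to\Lambda\to\shift^{-2}\Lambda\to\Coker(\lambda_{w})\to 0$ forces $\rank_{k}\Ker(\lambda_{w}) = \rank_{k}\Coker(\lambda_{w})$; telescoping the series gives each rank equal to $\binom{2n}{n}+\binom{2n}{n-1}$, and the desired identity then reduces to $\binom{2n+2}{n+1} = 2\bigl[\binom{2n}{n}+\binom{2n}{n-1}\bigr]$, which is a double application of Pascal's rule.
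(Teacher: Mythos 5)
Your characteristic-zero argument is exactly the one the paper sketches in Remark~\ref{rem:lefschetz} (an $\mathfrak{sl}_2(k)$-triple with $\lambda_w$ as raising operator, then Weyl's theorem and the structure of strings), and your endgame is sound: the Hilbert-series bookkeeping, the equality $\rank_k\Ker(\lambda_w)=\rank_k\Coker(\lambda_w)$ from the four-term exact sequence, and the identity $2\bigl[\binom{2n}{n}+\binom{2n}{n-1}\bigr]=\binom{2n+2}{n+1}$ all match the paper's computation of $2h(1)$. The genuine gap is the step you yourself flag as the main obstacle. Your claim that $\chr k>(n+1)/2$ is ``precisely'' what makes the relevant integer scalars invertible is not correct, and the semisimplicity you need is in fact \emph{false} in part of the allowed range. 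Concretely, semisimplicity with standard (Weyl-module) summands would force $\lambda_w^{\,n}\colon\Lambda_0\to\Lambda_{2n}$ to be an isomorphism; but $w^n=n!\,x_1y_1\cdots x_ny_n$, which vanishes whenever $\chr k=p\le n$. The hypothesis $\chr k>(n+1)/2$ permits primes in the range $(n+1)/2<p\le n$ --- for instance $n=4$, $p=3$, which is exactly the case the paper needs for elementary abelian $3$-groups in Corollary~\ref{cor:eapg}. In that range $\Lambda$ is not a direct sum of standard $\mathfrak{sl}_2$-strings, and the ``standard structure of $\mathfrak{sl}_2$-modules'' (injectivity of $e$ on all negative weight spaces, surjectivity onto positive ones) cannot be invoked: in characteristic $p$ the simple modules of highest weight $\ge p$ do not have this property. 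The scalars appearing in the primitive decomposition are of the form $j!\,m(m-1)\cdots(m-j+1)$ with string lengths $m$ running up to $n$ (the element $1\in\Lambda_0$ is a lowest-weight vector of weight $-n$), so the route you propose genuinely requires $\chr k>n$, a strictly stronger hypothesis.

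The proposition survives because it asserts much less than hard Lefschetz --- only that the single multiplication $\lambda_w$ is injective up to the middle degree and surjective from it --- and indeed the conclusion only forces $w^j\ne 0$ for $j\le(n+1)/2$, which is consistent with the stated bound on the characteristic. The paper does not prove this step itself: it cites \cite[Proposition~A.2]{CHI} for the injectivity/surjectivity in all allowed characteristics and reserves the $\mathfrak{sl}_2$ argument for characteristic $0$ only. To repair your proof you would either have to strengthen the hypothesis to $\chr k=0$ or $\chr k>n$ (which would lose the application to $p=3$), or replace the semisimplicity step by an argument, as in the cited reference, that isolates the one-step injectivity/surjectivity without appealing to the full primitive decomposition.
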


\begin{proof} 
See \cite[Proposition~A.2]{CHI} for a proof of the assertion concerning the injectivity/surjectivity of multiplication by $w$. Given this, 
it is elementary to check that the Hilbert series of $\Coker(\lambda_{w})$ and $\Ker(\lambda_{w})$ are as  stated. It remains to note that
\[
\rank_k \Coker(\lambda_{w}) + \rank_k \Ker(\lambda_{w}) = h(1) + h(1) = \binom{2n +2}{n+1}\,,
\]
where the second equality can be verified as follows:
\[
\begin{aligned}
2h(1) 
& =  \sum_{i=0}^{n} \left[\binom{2n}{i}- \binom{2n}{i-2}\right]
\!+\!  \sum_{i=n}^{2n} \left[\binom{2n}i - \binom{2n}{i+2}\right] \\
& = \binom{2n}{ n-1} + 2 \binom{2n}{n} + \binom{2n}{n+1} \\
& = \binom{2n +1}{n} +  \binom{2n+1}{n+1} \\
& = \binom{2n +2}{n+1}.  
\end{aligned}  
\]
This completes the proof.
\end{proof}

\begin{remark} 
\label{rem:lefschetz}
If $\chr k = 0$, then the first assertion in Proposition~\ref{prp:lefschetz} can be proved using the representation theory of $\mathrm{sl}_2(k)$, in a manner
similar to an argument that appears in the proof of the Hard Lefschetz Theorem found in \cite[p.~122]{GH}.  

Indeed, let $x_{1}^{*},\dots,x_{n}^{*},y_{1}^{*},\dots,y_{n}^{*}$ be the basis of $\Hom_{k}(\Lambda_{1},k)$ dual to the  given basis for $\Lambda_{1}$.  
The elements $x_i^*$ and $y_i^*$ 
induce $k$-linear derivations of degree $-1$ 
on $\Lambda$. Set $c:= \sum_i y_i^* x_i^*$; this is an endomorphism of $\Lambda$ of  degree $-2$. The restriction of $h := [c,  \lambda_w]$
to $\Lambda_{j}$ is multiplication by $n - j$. We also have  $[c,h] = -2c$ and $[\lambda_w,h] =2\lambda_w$, and thus the operators $\lambda_w,c,h$ endow 
$\Lambda$ with the structure of
a $sl_2(k)$-representation such that $\Lambda_{j}$ has weight $n-j$.   It follows that  $\lambda_w^j: \Lambda_{n-j} \to  \Lambda_{n+j}$ is an isomorphism for all $j
\geq 1$; see \cite[Chapter IV, Theorem 4(b)]{Se}.  
\end{remark}

\begin{corollary} 
\label{cor:lefschetz}
Let $k$ be a field with $\chr k \ne 2$ and $\Lambda$ an exterior algebra on $k$-vector space of rank $d$. If $d \geq 8$, then there is an element $w \in \Lambda_{2}$  such that
\[
 \rank_k \Coker(\lambda_{w}) + \rank_k \Ker(\lambda_{w})  = 2^{d}-2^{d-6}   < 2^d,
\]
where $\lambda_{w}\colon \Lambda \to \shift^{-2}\Lambda$ is multiplication by $w$. 
\end{corollary}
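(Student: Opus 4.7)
The plan is to reduce to the case $d=8$ of Proposition~\ref{prp:lefschetz} and then pad by a tensor factor to get the general $d \geq 8$ case. The choice of $n$ will be $n=4$, because that is the smallest value for which the quantity $\binom{2n+2}{n+1}$ in the proposition drops below $2^{2n}$.

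First I would fix $n=4$ and write $d = 8 + r$ with $r := d-8 \geq 0$. Split the underlying vector space of rank $d$ as the direct sum of a rank-$8$ subspace with basis $x_{1},\dots,x_{4},y_{1},\dots,y_{4}$ and a rank-$r$ subspace. This decomposition gives an isomorphism of graded $k$-algebras
\[
\Lambda \;\cong\; \Lambda' \otimes_{k} \Lambda'',
\]
where $\Lambda'$ is the exterior algebra on $x_{1},\dots,x_{4},y_{1},\dots,y_{4}$ and $\Lambda''$ is the exterior algebra on the remaining $r$ generators. Then I would take
\[
w := \sum_{i=1}^{4} x_{i}y_{i} \;\in\; \Lambda'_{2} \,\subseteq\, \Lambda_{2}.
\]

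Next, observe that under the tensor decomposition the map $\lambda_{w}\colon \Lambda \to \shift^{-2}\Lambda$ is exactly $\lambda_{w}^{\Lambda'} \otimes_{k} \mathrm{id}_{\Lambda''}$. Since $\Lambda''$ is $k$-flat (it is a finite-dimensional $k$-vector space), taking kernels and cokernels commutes with the tensor product, so
\[
\Ker(\lambda_{w}) \cong \Ker(\lambda_{w}^{\Lambda'}) \otimes_{k} \Lambda''
\qquad\text{and}\qquad
\Coker(\lambda_{w}) \cong \Coker(\lambda_{w}^{\Lambda'}) \otimes_{k} \Lambda''.
\]
Hence $\rank_{k}\Ker(\lambda_{w}) + \rank_{k}\Coker(\lambda_{w}) = \bigl(\rank_{k}\Ker(\lambda_{w}^{\Lambda'}) + \rank_{k}\Coker(\lambda_{w}^{\Lambda'})\bigr)\cdot 2^{r}$.

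The hypothesis $\chr k \neq 2$ gives $\chr k = 0$ or $\chr k \geq 3 > (n+1)/2 = 5/2$, so Proposition~\ref{prp:lefschetz} applies to $\Lambda'$ and yields
\[
\rank_{k}\Coker(\lambda_{w}^{\Lambda'}) + \rank_{k}\Ker(\lambda_{w}^{\Lambda'}) = \binom{2n+2}{n+1} = \binom{10}{5} = 252.
\]
Finally I would finish with the numerical identity $252 = 2^{8} - 2^{2}$, from which
\[
\rank_{k}\Coker(\lambda_{w}) + \rank_{k}\Ker(\lambda_{w}) = 252\cdot 2^{d-8} = 2^{d} - 2^{d-6} < 2^{d}.
\]

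There is no real obstacle: the only nontrivial input is Proposition~\ref{prp:lefschetz}, and the slight care required is in picking $n=4$ (the smallest $n$ with $\binom{2n+2}{n+1}<2^{2n}$) and verifying that the characteristic restriction $\chr k > (n+1)/2$ reduces, for $n=4$, to the stated hypothesis $\chr k \neq 2$.
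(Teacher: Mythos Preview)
Your proposal is correct and follows essentially the same route as the paper: pick an eight-element subset of the generators, factor $\Lambda \cong \Lambda' \otimes_k \Lambda''$, apply Proposition~\ref{prp:lefschetz} with $n=4$ to $\Lambda'$, and tensor up. Your write-up is in fact slightly more explicit than the paper's in checking the characteristic condition and in justifying why kernels and cokernels commute with $-\otimes_k \Lambda''$.
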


\begin{proof} 
Let $X$ be a basis for $\Lambda_{1}$. Select any eight element subset  
\[
X' = \{x_1, \dots, x_4, y_1, \dots, y_4\}
\]
of $X$, and set $w = \sum_i x_iy_i$. There is an isomorphism of $k$-algebras $\Lambda = \Lambda' \otimes_k \Lambda''$ where $\Lambda'$ is the algebra generated by $X'$ and $\Lambda''$ is the algebra generated by $X \setminus X'$.  By Proposition \ref{prp:lefschetz} 
\[
\rank_k \Coker(\lambda'_{w})  + \rank_k \Ker(\lambda'_{w}) = 
\binom {10}{5} = 252 = (2^8-4)\,,
\]
where $\lambda'_{w}\colon \Lambda' \to \shift^{-2}\Lambda'$ is, as before, multiplication by $w$. There are  isomorphisms of $k$-vector spaces
\begin{align*}
\Coker(\lambda_{w}) & \cong \Coker(\lambda'_{w}) \otimes_k \Lambda''\\
\Ker(\lambda_{w}) &\cong \Ker(\lambda'_{w}) \otimes_k \Lambda'' 
\end{align*}
from which we deduce
\[
\rank_k \Coker(\lambda_{w})   + \rank_k \Ker(\lambda_{w}) =     2^d - 2^{d-6}.   \qedhere
\]
\end{proof}

\begin{remark} 
If $\chr k = 2$, then for every $w\in \Lambda_{\ges 1}$ we have $w^2 = 0$ 
and hence
\[
\rank_k \Coker(\lambda_{w}) + \rank_k \Ker(\lambda_{w})    \geq 2^d.
\]
Thus  Corollary~\ref{cor:lefschetz} does not extend to $\chr k=2$; this is why all our examples are in characteristic not equal to two.
\end{remark}

\begin{remark} \label{E129}
The numbers $\binom{2n+2}{n+1}$, called \emph{central binomial coefficients}, are related to the  Catalan numbers,  $\{C_{n}\}$, by the formula
$\binom{2n+2}{n+1} = (n+2) C_{n+1}$.   Our counter-examples involve values of $n$ for which the inequality 
\begin{equation} 
\label{E531}
\binom{2n+2}{n+1} < 2^{2n}
\end{equation}
holds. As seen in the proof of Corollary \ref{cor:lefschetz}, it holds when $n = 4$, and this is the smallest value of $n$ for which it does. 
It follows from Stirling's 
formula that
\[
\binom{2n+2}{ n+1}  < 2^{2n} \cdot \frac{4}{\sqrt{\pi(n+1)}} 
\]
for all $n \geq 0$; see \cite[1.5]{Bb}. 
In particular, \eqref{E531} holds for all $n \geq 5$ too.
\end{remark}

\begin{remark}
\label{rem:upper} 
In Section~\ref{sec:rest} we need versions of  Proposition~\ref{prp:lefschetz} and Corollary~\ref{cor:lefschetz}   in which the $x_i$'s and $y_i$'s have lower
degree $-1$. 
In this case, we switch to upper indexing: by convention, for a graded object $X$, the component of upper degree $i$, written $X^i$, is defined to be $X_{-i}$.
Note that $(\shift^n X)^i = X^{n+i}$. We define the Hilbert series of a  graded vector space $W=\{W^{i}\}_{i\in\bbZ}$ satisfying $W^i = 0$ for $i \ll 0$  to be
$\Hilb{W}t = \sum_i \rank_k(W^i) t^i$.  

When $x_i, y_i\in \Lambda^1$ for all $i$,  the map in Proposition~\ref{prp:lefschetz}   takes the form
\[
\lambda_{w}\colon \Lambda \lra \shift^{2}\Lambda
\]
The Hilbert series of the cokernel and kernel of this morphism are still the same:
\[
\Hilb{\Coker(\lambda_{w})}t = \frac {h(t)}{t^{2}}  \qquad\text{and} \qquad \Hilb{\Ker(\lambda_{w})}t = t^{2n}h(1/t) 
\]
where $h(t)$ is as in Proposition~\ref{prp:lefschetz}. The equation in Corollary~~\ref{cor:lefschetz} remains valid.
\end{remark}

\section{Homology of finite free complexes}
\label{sec:homology}
In this section we construct counterexamples to Conjecture~\ref{conj:carlsson}. 

Let $(R,\fm,k)$ be a (commutative, noetherian) local ring $R$, with maximal ideal $\fm$ and residue field $k$. 
The \emph{embedding dimension} of $R$ is the integer 
$$
\edim R = \rank_k (\fm/\fm^2)
$$ 
and the \emph{codimension} of $R$ is the integer 
$$
\codim R = \edim R - \dim R.
$$
The $\fm$-adic completion of $R$ has the form $Q/I$, where  $(Q,\fn,k)$ is a regular local ring and 
$I \subseteq \fn^{2}$; see \cite[\S2.3]{BH}. For any such presentation, we have
$$
\edim R = \dim Q \quad \text{ and } \quad
\codim R = \dim Q - \dim R.
$$
We say $R$ is a \emph{complete intersection} if $\codim R =\rank_{k}(I/\fn I)$ or, equivalently, if  
$I$ can be generated by a $Q$-regular sequence; see \cite[Theorem~2.3.3]{BH}.

In the sequel, given a complex $X$ of $R$-modules with differential $d^{X}$ and an integer $m$, the shifted graded module $\shift^m X$ 
is a complex with differential $d^{\shift^mX} = (-1)^m  d^X$.
A \emph{finite free complex} of $R$-modules is a complex of the form
\[
0 \lra F_b \lra \cdots \lra F_a \lra 0 
\]
with each $F_i$ free of finite rank.

\begin{theorem}
\label{thm:homology}
Let $(R,\fm,k)$ be a complete intersection of codimension $r$.  If $r\geq 8$ and $\chr k \ne 2$, then there is a finite free complex of $R$-modules  $F$ with
\[
 \length_R \hh F =   2^r - 2^{r-6}.
\]
\end{theorem}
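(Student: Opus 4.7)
The plan is to realize $F$ as a mapping cone involving the Koszul complex of $R$ on a minimal generating set of $\fm$, twisted by the Lefschetz-type class supplied by Corollary~\ref{cor:lefschetz}. The starting point is the Koszul complex $K := K^R(\bar x_1, \dots, \bar x_n)$ on a minimal generating set of $\fm$, where $n = \edim R$. This is a finite free complex of $R$-modules carrying the structure of a DG $R$-algebra, and since $R$ is a complete intersection of codimension $r$, the homology $\hh K$ is annihilated by $\fm$ and is isomorphic, as a graded $k$-algebra, to the exterior algebra $\Lambda := \Lambda_k(e_1, \dots, e_r)$ on $r$ generators; concretely, passing to the completion $\hat R = Q/(f_1, \dots, f_r)$, this is the standard identification of $\hh K$ with $\operatorname{Tor}^Q_*(k, R)$ computed via the Koszul resolution of $R$ over $Q$.

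Since $r \geq 8$ and $\chr k \neq 2$, I invoke Corollary~\ref{cor:lefschetz} to obtain a class $w \in \Lambda_2 = \HH 2 K$ for which the multiplication map $\lambda_w \colon \Lambda \to \shift^{-2}\Lambda$ satisfies
\[
\rank_k \Coker(\lambda_w) + \rank_k \Ker(\lambda_w) = 2^r - 2^{r-6}.
\]
I then lift $w$ to a cycle $\tilde w \in K_2$ and consider $\mu_{\tilde w} \colon K \to \shift^{-2}K$, the $R$-linear map given by multiplication by $\tilde w$ in the DG algebra $K$. Because $\tilde w$ is a cycle, $\mu_{\tilde w}$ is a chain map, and under the identification above it induces $\lambda_w$ on homology. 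The desired complex is the mapping cone $F := \cone(\mu_{\tilde w})$, which is a finite free complex of $R$-modules.

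To compute $\length_R \hh F$, I use the long exact sequence of the triangle $K \xra{\mu_{\tilde w}} \shift^{-2}K \to F \to \shift^{-1}K$. Since the induced map on $\hh$ is $\lambda_w$, this breaks into short exact sequences of $R$-modules
\[
0 \to \Coker\bigl(\lambda_w\colon \Lambda_i \to \Lambda_{i+2}\bigr) \to \HH i F \to \Ker\bigl(\lambda_w\colon \Lambda_{i-1} \to \Lambda_{i+1}\bigr) \to 0.
\]
Both outer modules are finite-dimensional $k$-vector spaces, hence of finite length over $R$, so by additivity of length and summation over $i$,
\[
\length_R \hh F = \rank_k \Coker(\lambda_w) + \rank_k \Ker(\lambda_w) = 2^r - 2^{r-6}.
\]
The delicate point is the identification $\hh K \cong \Lambda$ as graded $k$-algebras, which is what guarantees that multiplication by the cycle $\tilde w$ on $K$ induces precisely $\lambda_w$ on homology; this is a classical fact for Koszul homology of complete intersections (going back to Tate), and once it is invoked the remainder of the argument is essentially formal.
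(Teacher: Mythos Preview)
Your proof is correct and follows essentially the same route as the paper: build the Koszul complex $K$ on a minimal generating set of $\fm$, use the Tate/Assmus identification $\hh K\cong\Lambda$ for a complete intersection, pick the Lefschetz class $w$ from Corollary~\ref{cor:lefschetz}, lift it to a cycle, and take the mapping cone of multiplication by that cycle; the long exact sequence then gives the length computation exactly as you wrote. The only cosmetic differences are notation ($z$ versus $\tilde w$, $\lambda_z$ versus $\mu_{\tilde w}$) and that the paper cites \cite[Theorem~2.3.1]{BH} for $\hh K\cong\Lambda$ where you sketch the $\operatorname{Tor}^Q_*(k,R)$ argument.
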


\begin{proof}
Let $K$ the Koszul complex on a minimal set 
of generators of $\fm$. Then $K$ is a commutative DG $R$-algebra, $\HH 1K$ is a $k$-vector space of dimension $r$, and there is an isomorphism of graded $k$-algebras
\begin{equation}
\label{eq:hK}
\hh K \cong \Lambda_{k} (\HH 1K); 
\end{equation}
see, for instance, \cite[Theorem~2.3.1]{BH}.  Set $\Lambda = H(F)$, 
let $w \in \HH 2K$ be an element as in Corollary \ref{cor:lefschetz}, and let $z \in K_{2}$ be a cycle representing $w$. 
Since $K$ is a DG algebra, multiplication by $z$ determines a morphism of DG $K$-modules  
\begin{equation}
\label{eq:kappa}
\lambda_z \colon  K \lra \shift^{-2} K \quad\text{given by  $u\mapsto uz$.}
\end{equation}
Set $F:= \cone(\lambda_z)$, the mapping cone of the morphism $\lambda_z$. 
There is an exact sequence of DG $K$-modules
\[
0\lra  \shift^{-2} K \lra F \lra \shift K \lra 0.
\]
Since $K$  is a finite free $R$-complex so is $F$.  The associated  exact sequence in homology has the form   
\[
\cdots \lra \HH jK \xra{\ \lambda_w\ } \HH {j+2}K \lra \HH jF \lra \HH {j-1}K \xra{\ \lambda_w\ } \HH{j+1}K  \lra \cdots
\]
Thus there is an exact sequence of graded $R$-modules 
\[
0 \to \Coker\left(\hh K \xra{\lambda_w} \shift^{-2} \hh K \right) \to  \hh F \to \Ker\left(\shift \hh K \xra{\lambda_w} \shift^{-1} \hh K \right) \to 0\,.
\]
It follows that
\[
\length_R \hh F = \rank_k  \Coker(\lambda_{w}) + \rank_k \Ker(\lambda_{w})  = 2^r - 2^{r-6},
\]
where the second equality is by the choice of $w$; see Corollary~\ref{cor:lefschetz}.
\end{proof}

\begin{corollary} 
\label{cor:eapg}
Let $p$ be an odd prime, $k$ a field of characteristic $p$, and $E$ an elementary abelian $p$-group of rank $r$.  If $r \geq 8$, there is a finite complex $F$ of free $kE$-modules such that $\rank_k \hh F < 2^r$. Thus  Conjecture \ref{conj:carlsson} fails when $p$ is odd. 
\end{corollary}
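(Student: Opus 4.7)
The plan is to recognize the group algebra $R := kE$ as a local complete intersection of codimension $r$ and then apply Theorem~\ref{thm:homology} directly, with a small translation between $R$-length and $k$-dimension at the end.

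First, I would exhibit $kE$ as a complete intersection. Writing $E = \langle g_1, \dots, g_r\rangle$ with each $g_i$ of order $p$, we have $kE \cong k[t_1, \dots, t_r]/(t_1^p - 1, \dots, t_r^p - 1)$. Since $\chr k = p$, the change of variables $x_i := t_i - 1$ yields $t_i^p - 1 = x_i^p$, so
\[
kE \cong k[x_1, \dots, x_r]/(x_1^p, \dots, x_r^p).
\]
This is an Artinian local ring with maximal ideal $\fm = (x_1, \dots, x_r)$ and residue field $k$. It is its own completion, of the form $Q/I$ with $Q = k[[x_1,\dots,x_r]]$ regular of dimension $r$ and $I$ generated by the $Q$-regular sequence $x_1^p, \dots, x_r^p$. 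Hence $kE$ is a complete intersection with $\edim(kE) = r$, $\dim(kE) = 0$, and therefore $\codim(kE) = r$.

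Since $r \geq 8$ and $\chr k = p \neq 2$, Theorem~\ref{thm:homology} supplies a finite free complex $F$ of $kE$-modules with $\length_{kE}\hh F = 2^r - 2^{r-6}$. Because $kE$ is local with residue field $k$, every simple $kE$-module is isomorphic to $k$; thus any finite-length $kE$-module $M$ satisfies $\rank_k M = \length_{kE} M$. Applying this to $\hh F$ gives $\rank_k \hh F = 2^r - 2^{r-6}$, which is strictly positive and strictly less than $2^r$, contradicting Conjecture~\ref{conj:carlsson}.

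There is essentially no real obstacle: the corollary is a repackaging of Theorem~\ref{thm:homology} via the standard identification of $kE$ as a complete intersection. The only two points to verify carefully are that $\codim(kE) = r$ and that for finite-length modules over a local ring whose residue field is $k$, length equals $k$-dimension; both are elementary.
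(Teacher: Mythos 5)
Your proposal is correct and follows exactly the paper's own argument: identify $kE \cong k[x_1,\dots,x_r]/(x_1^p,\dots,x_r^p)$ as a complete intersection of codimension $r$, apply Theorem~\ref{thm:homology}, and use that length over $kE$ equals $k$-dimension. The only difference is that you spell out the change of variables $x_i = t_i - 1$ explicitly, which the paper leaves implicit.
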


\begin{proof} 
There is an isomorphism of $k$-algebras
\[
kE \cong \frac{k[t_1, \dots, t_r]}{(t_1^p, \dots, t_r^p)}
\]
so that $kE$ is a complete intersection of codimension $r$. The result thus follows from Theorem~\ref{thm:homology}, since $\rank_kM =
\length_{kE}M$ for every $kE$-module $M$.
\end{proof}

\begin{remark}
\label{rem:carlsson}
Conjecture \ref{conj:carlsson}  is extrapolated from a conjecture of Carlsson~\cite[pp.~333]{Gc0}, also \cite[I.3]{Gc},  predicting that if a finite CW complex
$X$ admits a free, cellular $E$-action, then the total rank of its singular homology with $\bbZ/p$-coefficients, $\HH{*}{X, \bbZ/p}$, is at least $2^r$. In this
situation, $\HH{*}{X, \bbZ/p}$ is realized as the homology of a complex $F$ of $kE$-modules satisfying the hypotheses of Conjecture \ref{conj:carlsson} with $k =
\bbZ/p$.  Thus, Conjecture \ref{conj:carlsson} implies Carlsson's Conjecture, but we do not know whether the complex $F$ in Corollary~\ref{cor:eapg} arises from
a space with a free $E$-action. 

\end{remark}

\section{Total rank and Betti degree of complexes}
\label{sec:rest}
In this section we construct counterexamples to Conjectures~\ref{conj:gradedGTRC}--\ref{conj:BDC}.

For any local ring $(R,\fm,k)$ one has an inequality
\[
\length_R(R/\fm^3) \leq {\binom{\edim R +1}2}\,.
\]
When equality holds we say that the defining relations of $R$ are \emph{of order at least three}. This is equivalent to the condition that in some presentation of the $\fm$-adic completion of $R$ as  $Q/I$, for a regular local ring $(Q,\fn,k)$, one has $I \subseteq \fn^3$.

Henceforth $(R, \fm, k)$ will be a complete intersection; see the start of Section~\ref{sec:homology} for the meaning. Let $M$ be an $R$-complex with the $R$-module $\hh M$ finitely generated. As for modules, the Betti numbers $\{\beta^{R}_{i}(M)\}_{i\in\bbZ}$ of $M$ are the ranks of the free modules in a minimal free resolution of $M$, see \cite[\S1.1]{Pr}, and can be computed as
\[
\beta^{R}_{i}(M) = \rank_{k}\Ext iRMk\,.
\]
These numbers are finite for all $i$ and are equal to zero for $i\ll 0$. The \emph{Poincar\'e series} of $M$ is the generating series 
\[
\poin RMt:= \sum_{i\in \bbZ} \beta^{R}_{i}(M)t^{i} \in \bbZ[|t|][t^{-1}].
\]
There exist an integer $0\le c\le \codim R$ and a Laurent polynomial $p_M(t)$ with integer coefficients satisfying $p_{M}(1)\ne 0$, such that
\begin{equation}
\label{eq:poincare}
  \poin RMt = \frac {p_{M}(t)}{(1-t^{2})^{c}}\,.
\end{equation}
This result is due to Gulliksen~\cite[Corollary~4.2]{Gu}; see also \cite[Theorem 9.2.1]{Av:barca}.  The integer $c$ is the \emph{complexity} of $M$; Remark~\ref{rem:Betti-degree} reconciles this definition with the one given in the Introduction.

We are interested in the integer $p_{M}(1)$. If $c = 0$, then  
\[
p_M(1) = \poin RM1 = \sum_i \beta_i^R(M),
\]
the total Betti number of $M$. In view of this,  when $\codim R=0$ the next result  provides counterexamples to Conjecture \ref{conj:GTRC}.

\begin{theorem}
\label{thm:multiplicity}
Let $(R, \fm, k)$ be a complete intersection whose defining relations have order at least $3$.  If $\chr k\ne 2$ and $e := \edim R$ is at least $8$,  then there exists a complex $F$ with $\HH 0F \cong k \cong \HH 1F$ and $\HH iF = 0$ for all $i \ne 0,1$, with the property that
\[
p_{F}(1) =        2^e - 2^{e-6}.
\]
Moreover, when $\codim R\ge 1$ there exists a finitely generated $R$-module $M$ with
\[
 p_M(1) = 2^e - 2^{e-6}.
\]
\end{theorem}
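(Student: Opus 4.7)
The plan is to parallel the construction of Section~\ref{sec:homology}, but with the algebra $\Ext *Rkk$ in place of the finite-dimensional Koszul homology $\hh K$. The crucial structural input is the classical identification, valid when $R=Q/I$ is a complete intersection with $I\subseteq\fn^3$, of bigraded $k$-algebras
\[
 \Ext *Rkk \ \cong\ \Lambda_k V^* \otimes_k k[\chi_1,\ldots,\chi_c],
\]
where $V^*=(\fm/\fm^2)^*$ lives in (cohomological) degree one, the $\chi_j$'s are central Eisenbud operators in degree two, and $c=\codim R$. The condition that the defining relations have order at least three is exactly what rules out a twisting of the two tensor factors. By the upper-indexed form of Corollary~\ref{cor:lefschetz} (Remark~\ref{rem:upper}), choose a Lefschetz element $w_0 = \sum_{i=1}^{4} x_i^* y_i^* \in \Lambda^2 V^*$ formed from eight basis vectors of $V^*$, so that $\rank_k\Coker(\lambda_{w_0}) + \rank_k\Ker(\lambda_{w_0}) = 2^e - 2^{e-6}$. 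Setting $w:=w_0\otimes 1 \in \Ext 2Rkk$, let $F$ be a complex in the derived category $D(R)$ classified by the extension class $w$; concretely, a cohomological shift of the mapping cone of any morphism representing $w$. A direct long-exact-sequence calculation gives $\HH 0F \cong k \cong \HH 1F$ with $\HH iF = 0$ otherwise.

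Applying $R\Hom_R(-,k)$ to the defining triangle produces a long exact sequence whose connecting maps are right multiplication by $w$ on $\Ext *Rkk$. Collecting Hilbert series, one obtains
\[
 \poin RFt \ =\ \Hilb{\Coker(w\,\cdot\,)}t + t\,\Hilb{\Ker(w\,\cdot\,)}t.
\]
Since the $\chi_j$'s are central and $w = w_0\otimes 1$, multiplication by $w$ on $\Lambda V^* \otimes k[\chi]$ is $\lambda_{w_0}\otimes \mathrm{id}$, and the Hilbert series of its kernel and cokernel factor as the corresponding finite-dimensional series on $\Lambda V^*$ divided by $(1-t^2)^c$. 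Clearing the denominator in the normal form $\poin RFt = p_F(t)/(1-t^2)^c$, one reads off
\[
 p_F(t) = \Hilb{\Coker(\lambda_{w_0})}t + t\,\Hilb{\Ker(\lambda_{w_0})}t,
\]
hence $p_F(1) = \rank_k\Coker(\lambda_{w_0}) + \rank_k\Ker(\lambda_{w_0}) = 2^e - 2^{e-6}$ by the choice of $w_0$.

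For the module assertion, assume $\codim R \ge 1$ and take $M := \Omega^N F$ to be an $N$-th syzygy of $F$ in its minimal free resolution, with $N$ large enough that $M$ is a finitely generated $R$-module. Then $\beta^R_i(M) = \beta^R_{i+N}(F)$ for every $i\ge 0$, so $\poin RMt$ differs from $t^{-N}\poin RFt$ only by a polynomial correction; because $c\ge 1$, this correction is absorbed into a factor of $(1-t^2)^c$, which vanishes at $t=1$, leaving $p_M(1) = p_F(1) = 2^e - 2^{e-6}$. The main obstacle is the structural decomposition of $\Ext *Rkk$ in the first step: this is the single place where the hypothesis $I\subseteq\fn^3$ enters essentially. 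The remainder of the argument is routine homological bookkeeping.
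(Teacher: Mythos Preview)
Your argument is correct and follows essentially the same route as the paper: both use the decomposition $\Ext{}Rkk \cong \Lambda \otimes_k S$ available when $I\subseteq\fn^3$, realize the Lefschetz element $w$ of Corollary~\ref{cor:lefschetz} as a morphism $\zeta\colon \shift^{-2}X\to X$ on the minimal resolution of $k$, take $F$ to be a shift of $\cone(\zeta)$, and read off $p_F(1)$ from the resulting long exact sequence in $\Ext{}R{-}k$. The only cosmetic differences are that the paper tracks the degree shifts more carefully (your displayed formula for $p_F(t)$ is off by a power of $t$, though this is harmless at $t=1$) and that the paper produces $M$ explicitly as $\Coker(G_2\to G_1)$ for $G$ the minimal resolution of $F$, whereas you pass to a high enough syzygy; both yield $p_M(1)=p_F(1)$ by the same polynomial-correction reasoning.
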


\begin{proof}
Set $c=\codim R$. Since the defining relations of $R$ have order at least $3$, there is an isomorphism of $k$-algebras
\begin{equation} 
\label{E615}
\Ext {}Rkk \cong \Lambda \otimes_k S
\end{equation}
where $\Lambda$ is an exterior algebra generated by $e$ elements of upper degree one, and  $S$ is a polynomial algebra generated by $c$  elements of upper degree two; see \cite[Example~10.2.3]{Av:barca}.  Choose $w\in \Lambda^2$ as in Corollary~\ref{cor:lefschetz}; see also Remark~\ref{rem:upper}. Viewed as an element in $\Ext{}Rkk$, the element $w\otimes 1$ represents a morphism of $R$-complexes
\[
\zeta \colon \shift^{-2}X \lra X
\]
where $X$ is a minimal $R$-free resolution of $k$. Let $F=\shift \cone(\zeta)$, so there is an exact sequence of $R$-complexes
\[
0\lra \shift X \lra F\lra X \lra 0\,.
\]
The induced exact sequence in homology 
\[
\cdots \lra \HH{i+1}X \lra \HH{i-1}X \lra \HH iF \lra \HH iX \lra \HH{i-2}X \lra \cdots
\]
gives $\HH 0F \cong k\cong \HH 1F$ and $\HH iF = 0$ for all $i \ne 0,1$.  

Under the isomorphism \eqref{E615}, the endomorphism $\Ext {}R{\zeta}k$ of $\Ext {}Rkk$ induced by  $\zeta$ corresponds 
to the map 
\[
\lambda_{w} \otimes 1\colon \Lambda \otimes_k S \to \shift^2 \Lambda \otimes_k S,
\]
and thus there is an exact sequence of $\Lambda \otimes_k S$-modules
\[
0 \lra \shift^{-2} \Coker(\lambda_{w}) \otimes_k S \lra \Ext {}RFk \lra \shift^{-1} \Ker(\lambda_{w}) \otimes_k S \lra 0\,.
\]
As a sequence of graded $S$-modules, this sequence splits and  yields an isomorphism 
\[
\Ext {}RFk \cong W \otimes_k S
\]
of graded $S$-modules,
where $W$ is the graded $k$-vector space $\shift^{-2} \Coker(\lambda_{w}) \oplus \shift^{-1} \Ker(\lambda_{w})$. Since  the generating series of $S$ is $1/(1-t^2)^c$,  we  have  
\begin{equation}
\label{eq:poinF}
\poin RFt = \frac{\sum_{i} \rank_{k}(W^{i}) t^i}{(1-t^2)^c}\,.
\end{equation}
Evaluated at $t=1$, the numerator equals $\rank_{k}W$, which is non-zero because $W$ is non-zero. This justifies the first equality below; the second one is from Corollary~\ref{cor:lefschetz}:  
\[
p_F(1) = \rank_k W =  \rank_k \Coker(\lambda_{w}) + \rank_k \Ker(\lambda_{w})  = 2^{e-8} (2^{8}-4)
\]
This proves the first assertion.

Assume $c\ge 1$, so that $R$ is not regular. From \eqref{eq:poinF} it follows that the complexity of $F$ equals $c$ and that
\[
p_{F}(t) = \sum_{i} \rank_{k}(W^{i}) t^i
\]
Let $G$ be a minimal free resolution of $F$ and set $M:=\Coker (G_{2}\to G_{1})$. Since $\HH iG =\HH iF = 0$ for $i \geq 2$, the complex $\shift^{-1}(G_{\ges
  1})$ is a minimal free resolution of $M$, and hence  
\begin{equation}
\label{eq:poinM}
\poin RMt = \frac{\poin RFt - \poin RF0}{t} = \frac{\poin RFt - 1}{t} = \frac{(p_F(t)- (1-t^{2})^{c})t^{-1}}{(1-t^{2})^{c}}.
\end{equation}
This implies that the complexity of $M$ is also $c$, and since $c\ge 1$ this yields the first equality below:
\[
p_{M}(1)=p_{F}(1) = \rank_k W =2^{e}-2^{e-6}\,.
\]
The remaining equalities have already been justified. 
\end{proof}

\begin{remark}
In the course of the proof of the preceding result, we have in fact calculated the Poincar\'e series of the complex $F$. It is 
\[
 \poin RFt  = \frac{(1+t)^{e-8} (1+8t+27t^{2}+48t^{3}+42t^{4}+42t^{5}+48t^{6}+27t^{7}+8t^{8}+t^{9}) }{(1-t^{2})^{c}}   
\]
Using \eqref{eq:poinM} one can also compute the Poincar\'e series of $M$. 
\end{remark}

\begin{remark}
\label{rem:Betti-degree}
Let $R$ be a complete intersection, $M$ an $R$-complex with $\hh M$ finitely generated, and $c$ its complexity. If $c \geq 1$,  then from \eqref{eq:poincare} one gets that there are polynomials $q_{ev}$ and $q_{odd}$ of degree at most $c-2$ such for $i \gg 0$ one has
\[
\beta_i^R(M) = \frac{\bettideg M}{2^{c-1} (c-1)!} i^{c-1}+ \begin{cases}
q_{ev}(i) & \text{if $i$ is even and } \\
q_{odd}(i) & \text{if $i$ is odd. } \\
\end{cases}
\]
See also \cite[7.3]{AvB2}. It follows that the coefficient of $t^i$ in  $(1-t^2)^{c-1} \poin RMt$ is $\bettideg M$ for $i \gg 0$; that is to say, there are equalities
\[
\frac{p_M(t)}{1-t^2} = (1-t^2)^{c-1} \poin RMt = \frac{\bettideg M}{1-t} + l(t)
\]
for some Laurent polynomial $l(t)$.  In particular there is an equality
\[
p_M(1) = 2 \bettideg M\,.
\]
In view of this equality, when $\dim R=0$, that is to say, when $\edim R = \codim R$, Theorem~\ref{thm:multiplicity} specializes to the following statement.
\end{remark}

\begin{corollary} 
\label{cor:multiplicity}
Let $R$ be a complete intersection with defining relations of order at least $3$ and $\dim R=0$. If $\chr k\ne 2$ and $c:=\codim R\ge 8$, then there exists a finitely generated $R$-module $M$ with $\bettideg M < 2^{c-1}$. Thus Conjecture \ref{conj:BDC} fails. \qed
\end{corollary}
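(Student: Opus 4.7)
The plan is to derive Corollary \ref{cor:multiplicity} as a direct specialization of Theorem \ref{thm:multiplicity} combined with the identity in Remark \ref{rem:Betti-degree} relating the Betti degree to $p_M(1)$. Since $\dim R = 0$, we have $e := \edim R = \codim R = c$, and the hypothesis $c \ge 8$ therefore gives both $e \ge 8$ and $c \ge 1$, so all the assumptions of the second half of Theorem \ref{thm:multiplicity} are in force.

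First I would invoke Theorem \ref{thm:multiplicity} to produce a finitely generated $R$-module $M$ satisfying $p_M(1) = 2^{e} - 2^{e-6} = 2^{c} - 2^{c-6}$, and I would note that the proof of that theorem actually shows that the complexity of $M$ equals $c$; in particular the complexity is positive, so the quasi-polynomial description of Betti numbers recalled in Remark \ref{rem:Betti-degree} applies to $M$.

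Next I would apply the identity $p_M(1) = 2\,\bettideg M$ from Remark \ref{rem:Betti-degree}. Substituting the value of $p_M(1)$ from the previous step gives
\[
\bettideg M \;=\; \tfrac{1}{2}\bigl(2^{c} - 2^{c-6}\bigr) \;=\; 2^{c-1} - 2^{c-7}.
\]
Since $c \ge 8$ forces $2^{c-7} \ge 2 > 0$, this immediately yields $\bettideg M < 2^{c-1}$, which is precisely the negation of Conjecture \ref{conj:BDC} for this $M$.

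There is no real obstacle at the level of the corollary itself: the substantive work has already been carried out in Theorem \ref{thm:multiplicity} (which rests on Corollary \ref{cor:lefschetz} and on the structure of $\Ext{}Rkk$ when defining relations lie in $\fn^3$) and in the Poincar\'e-series manipulation of Remark \ref{rem:Betti-degree}. The corollary is thus just a matter of reading off $\bettideg M$ from $p_M(1)$ and verifying the strict inequality, so the proof can reasonably be given by a one-line reference as the authors have done with \verb|\qed|.
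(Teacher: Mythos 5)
Your proposal is correct and follows exactly the route the paper intends: the corollary is the specialization of Theorem~\ref{thm:multiplicity} (with $e=\codim R=c$ since $\dim R=0$) combined with the identity $p_M(1)=2\,\bettideg M$ from Remark~\ref{rem:Betti-degree}, giving $\bettideg M = 2^{c-1}-2^{c-7}<2^{c-1}$. Nothing is missing.
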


\begin{remark}
\label{rem:poin-F}
Let $n$ be a positive integer, $(R,\fm,k)$ a regular local ring of  dimension $2n$, and assume $\chr k > (n+1)/2$. Then $\Ext{}Rkk$ is the exterior algebra on a $k$-vector space of rank $2n$. Let $w$ be as in Proposition~\ref{prp:lefschetz} and $G$ the complex constructed from $w$ as in the proof of Theorem~\ref{thm:multiplicity} above. A direct computation using Proposition~\ref{prp:lefschetz} yields 
\[
\poin RGt = h(t) + t^{2n+1}h(1/t)\,.
\]
Hence the sequence of Betti numbers of $G$ is palindromic. By construction of $G$, the module $M:=\Coker(G_{2}\to G_{1})$ fits into an extension
\[
0\lra k\lra M\lra \fm \lra 0\,.
\]
The projective dimension of $M$ equals $2n$. The $R$-module $M$ is locally free on the punctured spectrum, and hence the same is true of its syzygy modules, $\Omega^{d}_{R}(M)$.  The Poincar\'e series of $M$ is $(\poin RGt-1)/t$, see \eqref{eq:poinM}, so the ranks of its  syzygy modules are 
\[
\rank_{R}\Omega^{d}_{R}(M)=
\begin{cases}
\binom {2n}d - \binom {2n}{d-1} & \text{for $0\le d\le n$} \\
\binom {2n}d - \binom {2n}{d+1}  & \text{for $n+1\le d\le 2n$.} 
\end{cases}
\]
The projective dimension of $\Omega^{d}_{R}(M)$ is $2n-d$ and its depth is $d$. This computation has a bearing on \cite[Question 25]{Rh}. Indeed, fix $1\le d\le 2n-1$ and set
\[
s(d):= \min\left\{\rank_{R}L\left|
\begin{aligned} 
\text{$L$ is finitely generated, of depth $d$, and is}\\
\text{ locally free on the punctured spectrum}
\end{aligned}
\right.\right\}
\]
For $d=n$, the computation above yields 
\[
s(n)\le \rank_{R}\Omega^{n}_{R}(M)= \binom{2n}{n}-\binom{2n}{n-1}\,.
\]
This is much better than the bound $s(n)\le \binom {2n-1}{n-1}$ given by the $n$th syzygy of $k$.
\end{remark}

\begin{corollary} 
\label{cor:dm}
Let $(R, \fm, k)$ be a regular local ring of dimension $d$. If $\chr k\ne 2$ and $d \geq 8$, then there is a differential $R$-module $D$ such that $\length_{R}\hh D$ is non-zero and finite,  $D$ admits a  free flag and  $\rank_RD <  2^{d}$. Thus Conjecture \ref{conj:DMC} fails.
\end{corollary}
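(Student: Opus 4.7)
The plan is to compress the finite free complex that Theorem~\ref{thm:multiplicity} already supplies as a counterexample to Conjecture~\ref{conj:GTRC} for this $R$, and invoke the observation from the introduction that the compression of a free complex admits a free flag.

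Concretely, I apply Theorem~\ref{thm:multiplicity} to $R$: since $R$ is regular, $\edim R = \dim R = d \geq 8$ and $\codim R = 0$, and the hypothesis on the order of the defining relations is vacuous because $R$ has none. This produces a complex $F$ with $\HH 0F \cong k \cong \HH 1F$, trivial homology in other degrees, and $p_F(1) = 2^d - 2^{d-6}$. Because $R$ has finite global dimension, $F$ admits a minimal free resolution $F' \xrightarrow{\simeq} F$ that is itself a \emph{finite} free complex, and $\rank_R F'_i = \beta_i^R(F)$ for every $i$. Summing gives $\rank_R F' = p_F(1) = 2^d - 2^{d-6}$, while $\hh{F'} \cong \hh F$ has length two.

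Now I let $D$ be the compression of $F'$: as a module, $D = \bigoplus_i F'_i$, equipped with the differential assembled from the differentials of $F'$. By the observation recalled in the introduction, $D$ admits a free flag (supplied by the partial sums $\bigoplus_{i \leq a + n} F'_i$, with $a$ the lowest index of $F'$: the successive quotients are the free modules $F'_{a+n}$, and the differential of $F'$ lowers homological index by one, so carries each stage into the previous one). Moreover $\rank_R D = \rank_R F' = 2^d - 2^{d-6} < 2^d$, and $\hh D \cong \hh{F'}$ as an $R$-module, so $\length_R \hh D = 2$, finite and nonzero. Thus $D$ refutes Conjecture~\ref{conj:DMC}.

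The only step needing explanation is the replacement of $F$ by its minimal free resolution $F'$: this is where regularity of $R$ is used, both to ensure $F'$ is a finite free complex and to equate $\rank_R F'$ with $p_F(1)$. Beyond this the argument is purely formal, as Theorem~\ref{thm:multiplicity} carries all of the numerical content.
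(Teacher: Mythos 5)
Your proposal is correct and is essentially the paper's own argument: the paper likewise takes the compression of a minimal free resolution $G$ of the complex $F$ from Theorem~\ref{thm:multiplicity}, notes $\hh G\cong\hh F\cong k^2$, cites the free-flag property of compressions of free complexes, and uses minimality to get $\rank_R D=\sum_i\beta_i^R(F)<2^d$. Your extra remarks (the order-three hypothesis being vacuous for a regular ring, and the explicit filtration by partial sums) are accurate and just make explicit what the paper leaves implicit.
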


\begin{proof}
Let $D$ be the compression (see the Introduction)  
of a minimal resolution $G$ of the complex $F$ constructed in Theorem~\ref{thm:multiplicity}.  Then $D$ is a differential $R$-module with homology  $\hh G \cong \hh F\cong k^{2}$; in particular the homology of $D$ is non-zero and of finite length. Moreover, $D$ has a  free flag because it is the compression of a free complex; see \cite[2.8(6)]{ABI}.  The minimality of $G$ gives $\rank_{R}D = \rank_R(G) = \sum_{i}\beta^{R}_{i}(F) < 2^d$. 
\end{proof}

\begin{corollary} 
\label{cor:TRC}
Let $R:=k[t_{1},\dots,t_{d}]$ be the polynomial ring over a field $k$ in indeterminates $t_{1},\dots, t_{d}$ of upper degree two, viewed as a DG algebra with zero differential.  If $\chr k \ne 2$ and $d \geq 8$, then there is a semifree DG $R$-module  $G$ with  $\CH{0}{G}\cong k \cong \CH{3}{G}$ and $\CH{j}{G} = 0$ for all $j \ne 0, 3$  and such that  $\rank_R G < 2^{d}$. Thus Conjecture \ref{conj:gradedGTRC} fails.
\end{corollary}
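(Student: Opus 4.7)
The plan is to mimic the proof of Theorem~\ref{thm:multiplicity}, with the graded polynomial ring $R=k[t_1,\dots,t_d]$ (viewed as a DG algebra with zero differential) playing the role of a regular local ring whose Ext algebra is purely exterior.  Let $X = R \otimes_k \Lambda$, where $\Lambda$ is the exterior algebra over $k$ on generators $e_1,\dots,e_d$ of upper degree $1$; equipped with the differential $d(e_i)=t_i$, this is the Koszul semifree DG $R$-resolution of $k$, of rank $2^d$ as a graded free $R$-module, with $\CH{0}{X}\cong k$ and $\CH{j}{X}=0$ for $j\ne 0$.  Since the differential on $X$ lands in $(t_1,\dots,t_d)X$, the complex $\Hom_R(X,k)$ has zero differential, so
\[
\Ext{}{R}{k}{k}\ \cong\ \Hom_k(\Lambda,k)
\]
is an exterior algebra on $d$ generators in upper degree $-1$.

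Next, invoke Corollary~\ref{cor:lefschetz} in the upper-indexed form of Remark~\ref{rem:upper}: since $d\ge 8$ and $\chr k\ne 2$, there is an element $w$ of upper degree $-2$ in this exterior algebra with
\[
\rank_k\Coker(\lambda_w)+\rank_k\Ker(\lambda_w)=2^d-2^{d-6},
\]
where $\lambda_w\colon \Hom_k(\Lambda,k)\to \shift^{-2}\Hom_k(\Lambda,k)$ is multiplication by $w$.  The class $w$ is represented in the derived category of DG $R$-modules by a morphism $\zeta\colon \shift^{2}X \to X$ of degree $0$, and we set $G' := \shift^{-3}\cone(\zeta)$.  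Applying $\CH{*}{-}$ to the triangle $\shift^{2}X\to X\to\cone(\zeta)\to\shift^{3}X$, and using that $\CH{*}{\shift^{2}X}$ is $k$ concentrated in upper degree $-2$ while $\CH{*}{X}$ is $k$ concentrated in upper degree $0$, a long exact sequence argument analogous to the one in the proof of Theorem~\ref{thm:multiplicity} gives $\CH{0}{G'}\cong k\cong\CH{3}{G'}$ and $\CH{j}{G'}=0$ for all other $j$.

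Finally, let $G$ be the minimal semifree model of $G'$; it exists because $\hh{G'}$ has finite $k$-dimension, has the same cohomology as $G'$, and by minimality satisfies $\rank_R G = \rank_k\Ext{}{R}{G}{k}$.  Applying $\Ext{}{R}{-}{k}$ to the triangle defining $\cone(\zeta)$ produces a triangle whose connecting morphism is identified with $\lambda_w$, and the associated long exact sequence of graded $k$-vector spaces yields
\[
\rank_k\Ext{}{R}{G}{k}\ =\ \rank_k\Coker(\lambda_w)+\rank_k\Ker(\lambda_w)\ =\ 2^d-2^{d-6}\ <\ 2^d.
\]
The main technical point to verify is that the endomorphism of $\Ext{}{R}{k}{k}$ induced by $\zeta$ corresponds, under the identification with $\Hom_k(\Lambda,k)$, to multiplication by $w$; this follows from the fact that $\Ext{}{R}{k}{k}$ acts on itself by Yoneda composition and that $\zeta$ was chosen to represent $w$ by construction.
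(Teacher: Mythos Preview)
Your proof is correct and follows essentially the same approach as the paper: both construct $G$ as a minimal model of the shifted cone of a morphism $\zeta\colon \shift^2 X \to X$ representing the Lefschetz element $w$ from Corollary~\ref{cor:lefschetz}, and both compute the rank via the long exact sequence induced by the defining triangle. The only cosmetic difference is that the paper phrases the computation in terms of $\hh{\End_R(X)}$ and $\hh{\Hom_R(F,X)}$ (equivalently $\hh{F\otimes_R k}$) rather than $\Ext{}Rkk$ and $\Ext{}RGk$, but since $X\simeq k$ these are the same objects.
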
 

\begin{proof} 
We construct $G$ by mimicking the argument of Theorem \ref{thm:multiplicity} in the setting of DG-modules. In detail, let $X$ be the Koszul resolution of $k$,
given by the commutative DG-$R$-algebra generated by elements $e_1, \dots, e_d$ of upper 
degree one and $d(e_i) = t_i$. 
Since $X$ is quasi-isomorphic to $k$ as DG-$R$-modules,  $\hh{\End_R(X)}$ is an exterior algebra on $d$ upper degree $-1$ elements. 
Let  $\zeta: \shift^2 X \to X$ be a degree $-2$ cycle in $\End_R(X)$ that represents the degree $-2$ element $w$ of the exterior algebra $\hh{\End_R(X)}$ given
by  Corollary \ref{cor:lefschetz}, and define  $F = \shift^{-3} \cone(\zeta)$,  so that there is an exact sequence 
\begin{equation} 
\label{eq:E116}
0 \lra \shift^{-3} X \lra F \lra  X \lra 0
\end{equation}
of DG-$R$-modules. It follows that $\CH{3}F \cong k \cong \CH{0}F$ and $\CH{i}F = 0$ for $i \ne 0,3$.  Now take $G$ to be a minimal DG-$R$-module associated to
$F$. Evidently $G$ has the same cohomology as $F$ and, since it is minimal, $\rank_RG = \rank_k(G \otimes_R k)$. The exact sequence \eqref{eq:E116} induces the
exact sequence
\[
\begin{aligned}
\cdots \lra & \CH{j+2}{\End_R(X)} \xra{\ \lambda_w \ } \CH{j}{\End_R(X)} \lra \CH{j}{\Hom_R(F, X)}\\
& \lra \CH{j+3}{\End_R(X)} \xra{\ \lambda_w \ }  \CH{j+1}{\End_R(X)} \lra  \cdots 
\end{aligned}
\]
Given this, Corollary \ref{cor:lefschetz} yields the inequality below
\[
\rank_k(G \otimes_R k) = \rank_k\hh{F \otimes_R k} = \rank_{k}\hh{\Hom_{R}(F,X)}< 2^d\,.
\]
The first equality holds because the DG $R$-modules $F$ and $G$ are quasi-isomorphic and semifree; the second one is by adjunction, as $X$ and
$k$ are quasi-isomorphic. 
\end{proof}

\begin{remark}
\label{rem:halperin}
Halperin's Toral Rank Conjecture~\cite[Problem 1.4]{Sh} predicts that  for any topological space $X$ that is reasonable
(say, a finite nilpotent CW complex) and 
 that admits a free action of a $d$-dimensional torus $T$, the rational homology of $X$ satisfies 
\[
\sum_i \rank_{\bbQ}\HH{i}{X, \bbQ}\geq 2^{d}\,.
\]
The validity of 
Conjecture \ref{conj:gradedGTRC} implies the Toral Rank Conjecture: If $X$ admits such an action, then the relative minimal model of the corresponding Borel
fibration is a semifree DG module $F$ over $R:=\bbQ[t_{1},\dots,t_{d}]$ with $\rank_{\bbQ}\hh F$ finite and non-zero,  and $\rank_{\bbQ}\CH{i}{\bbQ\otimes_{R}F}
=\rank_{\bbQ}\CH{i}{X, \bbQ}$;   
see~\cite[\S6]{AH} or \cite[\S7.3.2]{FOT}. Then Conjecture~\ref{conj:gradedGTRC} applied to a minimal DG $R$-module
quasi-isomorphic to $F$ would yield the desired lower bound on $\sum_i \rank_{\bbQ} \HH{i}{X, \bbQ}$. 
However, our counterexamples do not affect the status of the Toral
Rank Conjecture because the complex $G$ in Corollary~\ref{cor:TRC} 
{\em cannot} 
be quasi-isomorphic, even as a DG $R$-module, to any $F$ that arises as above. 

Indeed such an $F$ would come equipped with a morphism of of DG algebras $\phi\colon R\to F$, and since $\CH 2F\cong \CH 2G=0$, by construction, a standard argument in the homotopy theory of DG algebras, see \cite[\S2.2]{FOT}, implies that $\phi$ is homotopic to morphism that factors through $\bbQ$, and hence that 
\[
\rank_{\bbQ}\hh{\bbQ\otimes_{R}F}\geq \rank_{\bbQ}\mathrm{Tor}^{R}(\bbQ,\bbQ) = 2^{d}\,.
\]
This implies $\rank_{R}G\ge 2^{d}$, contradicting the conclusion of Corollary~\ref{cor:TRC}.
\end{remark}

\begin{remark} 
\label{rem:new} 
In characteristic $0$, Conjectures \ref{conj:gradedGTRC}--\ref{conj:BDC} admit families of counter-examples in which the value of the appropriate invariants deviate from the predicted one in an increasing fashion. 

For example, for each $n \geq 1$, if $R$ is any regular local ring of dimension $2n$ whose residue field $k$ has characteristic $0$, then the construction in the proof of Theorem \ref{thm:multiplicity}  gives a minimal finite free complex $G$ such that  $\HH 0G \cong k \cong \HH 1G$ and $\HH iG = 0$ for all $i \ne 0,1$. Moreover, by Remark \ref{E129} one has
\[
\rank_R(G) = \binom{2n+2}{n+1} < 2^{2n} \cdot \frac{4}{\sqrt{\pi(n+1)}}.
\]
The difference $2^{2n} - \binom{2n+2}{n+1}$ tends to $\infty$ as $n$ goes to $\infty$, but  ${\binom{2n+2}{n+1}}^{1/2n}$ tends to $2$. This suggests a question: 

\emph{Is there a real number $a>1$ such that each finite free complex $F$ of modules over a regular local ring $R$ with $\hh F$ non-zero and of finite length satisfies
\[
\rank_R(F) \geq a^{\dim R}?
\]
}
The family of examples constructed here shows that such an $a$ must satisfy 
\[
a \leq \min \left\{\binom{2n+2}{n+1}^{\frac{1}{2n}} \right\} < 1.9605.
\]
\end{remark}

\begin{remark} 
\label{rem:av}
Let $R$ be a regular local ring of dimension $d \geq 8$ and $F$ the complex in Theorem~\ref{thm:multiplicity}. As $\fm \hh F=0$ one has that $\hh F$ is a module over $R/\fm$ and hence also over $R/(\bsx)$, where $\bsx$ is any system of parameters for $R$. Since $R$ is regular, $\bsx$ is a regular sequence and the Koszul complex, say $E$, on $\bsx$ is a $R$-free resolution of $R/(\bsx)$. However,  there cannot be a DG $E$-module structure on $F$: If there were, then $\rank_{R}F\ge 2^{d}$ by \cite[Theorem~5.1]{AIN}, contrary to the conclusion of Theorem~\ref{thm:multiplicity}.  See also \cite[5.4]{AIN}.
\end{remark}

\begin{ack}
It is a pleasure to thank Lucho Avramov, Dave Benson, and Seth Lindokken for helpful conversations, which extended the scope of this work.  We are grateful to Volker Puppe and Matthias Franz for comments and suggestions. We are also indebted to one of the referees who gave us extensive feedback on the first version of this manuscript, leading to substantial revisions. SBI was partially supported by NSF grant DMS-1700985  and MEW was partially supported by  grant  \#318705  from the Simons Foundation.
\end{ack}


\end{document}